\documentclass{amsart}

\newtheorem{theorem}{Theorem}
\theoremstyle{plain}

\newtheorem{corollary}{Corollary}

\newtheorem{lemma}{Lemma}

\newtheorem{proposition}{Proposition}
\newtheorem{remark}{Remark}

\numberwithin{equation}{section}

\begin{document}

\noindent

\title[Gr\"{u}ss type inequality]{Some Gr\"{u}ss type inequalities for $n$-tuples of vectors in semi-inner modules}
\author[A. G. Ghazanfari, B. Ghazanfari]{A. G. Ghazanfari$^{1*}$ and B. Ghazanfari$^{2}$}

\address{$^{1,2}$ Department of Mathematics, Lorestan University, P. O. Box 465, Khoramabad, Iran.}

\email{ghazanfari.amir@gmail.com (Amir Ghasem Ghazanfari)}

\email{bahman$_{-}$ghazanfari@yahoo.com (Bahman Ghazanfari)}

\keywords{Gr\"{u}ss inequality, semi-inner product $C^*$-modules, semi-inner product $H^*$-modules.\\
\indent $^{*}$ Corresponding author.}
\begin{abstract}
Some Gr\"{u}ss type inequalities in semi-inner product modules over $C^*$-algebras and $H^*$-algebras for $n$-tuples of vectors are established.
Also we give their natural applications for the approximation of the discrete Fourier and the Melin transforms in such modules.
\end{abstract}
\thanks{\it{2010 mathematics subject classification}, Primary 46L08, 46H25; Secondary 46C99, 26D99.}
\maketitle

\section{Introduction}
\noindent
For two Lebesgue integrable functions $f,g:[a,b]\rightarrow \mathbb{R}$, consider the
\v{C}eby$\breve{s}$ev functional:

\begin{equation*}
T(f,g):=\frac{1}{b-a}\int_{a}^{b}f(t)g(t)dt-\frac{1}{b-a}%
\int_{a}^{b}f(t)dt\frac{1}{b-a}\int_{a}^{b}g(t)dt.
\end{equation*}%

In 1934, G. Gr\"{u}ss \cite{gru} showed that
\begin{equation}\label{1.1}
\left\vert T(f,g)\right\vert \leq \frac{1}{4}%
(M-m)(N-n),
\end{equation}%
provided $m,M,n,N$ are real numbers with the property $-\infty <m\leq f\leq
M<\infty $ and $-\infty <n\leq g\leq N<\infty \quad \text{a.e. on }[a,b].$
The constant $\frac{1}{4}$ is best possible in the sense that it cannot be
replaced by a smaller quantity and is achieved for
\[
f(x)=g(x)=sgn \Big(x-\frac{a+b}{2}\Big).
\]
The discrete version of (\ref{1.1}) states that:
If $a\leq a_i\leq A,~b\leq b_i\leq B,~(i=1,...,n)$ where $a,A,b,B,a_i,b_i$ are real
numbers, then
\begin{equation}\label{1.2}
\left| \frac{1}{n}\sum_{i=1}^na_ib_i-\frac{1}{n}\sum_{i=1}^na_i.\frac{1}{n}\sum_{i=1}^nb_i\right|\leq \frac{1}{4}(A-a)(B-b),
\end{equation}
where the constant $\frac{1}{4}$ is the best possible for an arbitrary $n\geq 1$. Some refinements of the
discrete version of Gr\"{u}ss inequality (\ref{1.2}) are given in \cite{kech}.

In the recent years, this inequality has been investigated, applied
and generalized by many authors in different areas of mathematics, among others
in inner product spaces \cite{dra1},
in the approximation of integral transforms \cite{li} and the references therein,
 in semi-inner $*$-modules for positive linear functionals
and $C^*$-seminorms \cite{gha}, for positive maps \cite{mos1}.

A good example of how Gr\"{u}ss type inequalities
can cross mathematical categories is provided by the development of the Gr\"{u}ss type inequalities in
inner product modules over $H^*$-algebras and $C^*$-algebras \cite{gha1,ili}.
For an entire chapter devoted to the history of this inequality see \cite{mit}
where further references are given.

We recall some of the most important Gr\"{u}ss type discrete
inequalities for inner product spaces that are available in \cite{dra}.

\begin{theorem}\label{t1.1} Let $(H; \langle \cdot, \cdot\rangle)$ be an inner product space over $\mathbb{K};~
(\mathbb{K} = \mathbb{C},\mathbb{R}),~ x_i,~ y_i \in H,~ p_i \geq 0 ~(i = 1, . . . , n)~ (n \geq 2)$ with
$\sum_{i=1}^n p_i= 1$. If $x, X, y, Y\in H$ are such that
\begin{equation*}
 Re \left<X- x_i, x_i - x \right>\geq 0\quad and\quad Re \left<Y - y_i, y_i -y\right>\geq 0
\end{equation*}
for all $i\in \{1, . . . , n\}$, or, equivalently,
\begin{equation*}
\left\|x_i-\frac{x+X}{2}\right\|\leq\frac{1}{2}\|X-x\| \quad and \quad \left\|y_i-\frac{y+Y}{2}\right\|
\leq\frac{1}{2}\|Y-y\|
\end{equation*}
for all $i\in \{1, . . . , n\}$, then the following inequality holds
\begin{equation*}
\left|\sum_{i=1}^n p_i \langle x_i ,y_i \rangle-\left< \sum_{i=1}^n p_i x_i, \sum_{i=1}^n p_i y_i \right>\right|
\leq\frac{1}{4}\|X-x\|\|Y-y\|.
\end{equation*}
The constant $\frac{1}{4}$ is best possible in
the sense that it cannot be replaced by a smaller quantity.
\end{theorem}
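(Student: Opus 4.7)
The plan is to reduce the estimate to three self-contained steps, each of which is a standard move in this circle of ideas: a covariance-type identity, an inner-product Cauchy--Schwarz bound, and a ``diameter'' estimate obtained from the hypothesis via the parallelogram law.

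\textbf{Step 1 (covariance identity).} Write $\bar{x}=\sum_{i=1}^n p_i x_i$ and $\bar{y}=\sum_{i=1}^n p_i y_i$. Using $\sum p_i=1$, a direct expansion yields
\[
T_n(p;x,y) := \sum_{i=1}^n p_i\langle x_i,y_i\rangle - \langle \bar{x},\bar{y}\rangle
= \sum_{i=1}^n p_i\langle x_i-\bar{x},\,y_i-\bar{y}\rangle .
\]
This recentering is the key because it turns $T_n$ into a weighted inner-product pairing of \emph{centered} families.

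\textbf{Step 2 (Cauchy--Schwarz).} Apply the Cauchy--Schwarz inequality in $H$ to the weighted sum on the right of Step 1 to obtain
\[
|T_n(p;x,y)|^2 \;\le\; \Bigl(\sum_{i=1}^n p_i\|x_i-\bar{x}\|^2\Bigr)\Bigl(\sum_{i=1}^n p_i\|y_i-\bar{y}\|^2\Bigr)
= T_n(p;x,x)\,T_n(p;y,y).
\]
Thus the problem is reduced to estimating the two ``variances'' $T_n(p;x,x)$ and $T_n(p;y,y)$ separately.

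\textbf{Step 3 (diameter bound, the main step).} Here lies the only real work. From $\operatorname{Re}\langle X-x_i,x_i-x\rangle\ge0$, expand to get $\|x_i\|^2\le \operatorname{Re}\langle X+x,x_i\rangle - \operatorname{Re}\langle X,x\rangle$; multiplying by $p_i$ and summing gives
\[
T_n(p;x,x)=\sum_{i=1}^n p_i\|x_i\|^2-\|\bar{x}\|^2 \le \operatorname{Re}\langle X-\bar{x},\,\bar{x}-x\rangle.
\]
Now use the identity $\|X-x\|^2=\|X-\bar x\|^2+2\operatorname{Re}\langle X-\bar x,\bar x-x\rangle+\|\bar x-x\|^2$ together with the elementary inequality $\|X-\bar x\|^2+\|\bar x-x\|^2\ge\tfrac12\|X-x\|^2$ (itself a parallelogram / AM--QM consequence) to conclude $\operatorname{Re}\langle X-\bar{x},\bar{x}-x\rangle\le\tfrac14\|X-x\|^2$. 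Hence $T_n(p;x,x)\le\tfrac14\|X-x\|^2$, and symmetrically $T_n(p;y,y)\le\tfrac14\|Y-y\|^2$.

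\textbf{Step 4 (combination and sharpness).} Combining Steps 2 and 3 yields the claimed inequality. For optimality of $\tfrac14$, reduce to the scalar case $H=\mathbb{R}$, $n=2$, with $p_1=p_2=\tfrac12$, $x_1=x,\ x_2=X$, $y_1=y,\ y_2=Y$: both sides equal $\tfrac14(X-x)(Y-y)$, so no smaller constant can work. The main obstacle is the sharp passage in Step 3 from the hypothesis on each $x_i$ to a bound on the weighted variance; once this is in hand, Steps 1, 2 and 4 are routine.
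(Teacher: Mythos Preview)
Your proof is correct. The paper does not prove Theorem~\ref{t1.1} itself (it is quoted from \cite{dra}), but its proof of the $C^*$-module generalization (Theorem~\ref{t3.1} and Corollary~\ref{c3.1}) specializes to essentially your argument: your Step~1 covariance identity is Lemma~\ref{l2.1}, your Step~2 is the Schwarz inequality (\ref{3.4}) for the semi-inner product $G_{\overline p}$, and the sharpness example with $n=2$, $p_1=p_2=\tfrac12$ is the same one used in Corollary~\ref{c3.1}.

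The only genuine difference is in Step~3. The paper's route uses the equivalent ``ball'' form of the hypothesis, $\bigl\|x_i-\tfrac{x+X}{2}\bigr\|\le\tfrac12\|X-x\|$, together with the elementary bound $G_{\overline p}(\overline x)\le\sum_i p_i\|x_i-a\|^2$ of (\ref{2.9}) (taking $a=\tfrac{x+X}{2}$), which gives $T_n(p;x,x)\le\tfrac14\|X-x\|^2$ in a single line. You instead start from the $\operatorname{Re}\langle X-x_i,x_i-x\rangle\ge0$ form, sum to obtain $T_n(p;x,x)\le\operatorname{Re}\langle X-\bar x,\bar x-x\rangle$, and then squeeze this by a parallelogram/AM--QM argument. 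Both routes are valid; the paper's is shorter, while yours avoids invoking the equivalence of the two forms of the hypothesis.
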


\begin{theorem}
\label{t1.2} Let $(H; \langle \cdot, \cdot\rangle)$ and $\mathbb{K}$ be as above and
$\overline{x}=(x_1, . . . , x_n)\in H^n$, $\overline{\alpha}=(\alpha_1, . . . , \alpha_n) \in \mathbb{K}^n$ and
$\overline{p}=(p_1,...,p_n)$ a probability vector. If $x,X \in H$ are
such that
\begin{equation*}
 Re \left<X- x_i, x_i - x \right>\geq 0~for~ all~ i\in \{1, . . . , n\},
\end{equation*}
or, equivalently,
\begin{equation*}
\left\|x_i-\frac{x+X}{2}\right\|\leq\frac{1}{2}\|X-x\| ~for~ all~ i\in \{1, . . . , n\},
\end{equation*}
holds, then the following inequality holds
\begin{align*}
\left\|\sum_{i=1}^np_i\alpha_i x_i-\sum_{i=1}^n p_i \alpha_i \sum_{i=1}^n p_ix_i\right\|
&\leq\frac{1}{2}\|X-x\|\sum_{i=1}^np_i\left|\alpha_i-\sum_{j=1}^np_j\alpha_j\right|\\
&\leq\frac{1}{2}\|X-x\|\left[\sum_{i=1}^np_i|\alpha_i|^2-\left|\sum_{i=1}^np_i\alpha_i\right|^2\right]^{\frac{1}{2}}.
\end{align*}
The constant $\frac{1}{2}$ in the first and second inequalities is best possible.
\end{theorem}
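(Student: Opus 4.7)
The plan is to reduce the left-hand side to a single sum with coefficients that sum to zero, so that I can recenter the $x_i$ at $(x+X)/2$ and invoke the hypothesis. Setting $\bar{\alpha}:=\sum_{j=1}^n p_j\alpha_j$, the identity
\[
\sum_{i=1}^n p_i\alpha_i x_i - \bar{\alpha}\sum_{i=1}^n p_i x_i = \sum_{i=1}^n p_i(\alpha_i-\bar{\alpha})x_i
\]
holds by direct expansion, and since $\sum_{i=1}^n p_i(\alpha_i-\bar{\alpha})=0$, I may subtract any fixed vector $c\in H$ from $x_i$ without changing the sum:
\[
\sum_{i=1}^n p_i(\alpha_i-\bar{\alpha})x_i = \sum_{i=1}^n p_i(\alpha_i-\bar{\alpha})\bigl(x_i-c\bigr).
\]
This is the Korkine-type identity that underlies all Gr\"uss proofs in inner product settings.

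Next I choose $c=(x+X)/2$. The equivalent form of the hypothesis stated in the theorem gives $\|x_i-c\|\leq\tfrac{1}{2}\|X-x\|$ for every $i$. Applying the triangle inequality in $H$ followed by this uniform bound yields
\[
\Bigl\|\sum_{i=1}^n p_i(\alpha_i-\bar{\alpha})(x_i-c)\Bigr\|\leq \sum_{i=1}^n p_i|\alpha_i-\bar{\alpha}|\,\|x_i-c\|\leq \tfrac{1}{2}\|X-x\|\sum_{i=1}^n p_i|\alpha_i-\bar{\alpha}|,
\]
which is the first asserted inequality.

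For the second inequality I apply the discrete Cauchy--Schwarz inequality with weights $\sqrt{p_i}$, using $\sum p_i=1$:
\[
\sum_{i=1}^n p_i|\alpha_i-\bar{\alpha}| = \sum_{i=1}^n \sqrt{p_i}\cdot\sqrt{p_i}\,|\alpha_i-\bar{\alpha}|\leq \Bigl(\sum_{i=1}^n p_i|\alpha_i-\bar{\alpha}|^2\Bigr)^{1/2},
\]
and then the variance identity $\sum p_i|\alpha_i-\bar{\alpha}|^2=\sum p_i|\alpha_i|^2-|\bar{\alpha}|^2$ produces the final form.

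The only genuinely non-routine part is the sharpness claim for the constant $\tfrac{1}{2}$. For this I would test the chain of inequalities on a minimal example, e.g.\ $n=2$ with $p_1=p_2=\tfrac{1}{2}$, $x_1=x$, $x_2=X$, and $\alpha_1=-\alpha_2=1$, so that the two $\|x_i-(x+X)/2\|$ bounds are attained with equality and the triangle inequality is saturated because the summands are collinear; any smaller constant would then fail. The recentering trick is standard, so I do not expect real obstacles beyond checking this extremal configuration.
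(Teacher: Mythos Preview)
Your argument is correct and is precisely the route the paper takes: Theorem~\ref{t1.2} is quoted from \cite{dra} without proof, but the paper's own machinery for its generalization (Lemma~\ref{l2.1}, identity~(2.7), together with the estimate in Remark~\ref{r3.1}(ii)) is exactly your recentering identity $\sum p_i(\alpha_i-\bar\alpha)x_i=\sum p_i(\alpha_i-\bar\alpha)(x_i-c)$ followed by the triangle inequality and the ball condition with $c=(x+X)/2$. Your sharpness configuration ($n=2$, $p_1=p_2=\tfrac12$, $x_1=x$, $x_2=X$, $\alpha_1=-\alpha_2=1$) is also in the same spirit as the paper's sharpness verification in Corollary~\ref{c3.1}.
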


Motivated by the above results we establish some new Gr\"{u}ss type inequalities in semi-inner product modules over $C^*$-algebras
and $H^*$-algebras for $n$-tuples of vectors, which are generalizations of Theorem \ref{t1.1} and Theorem \ref{t1.2}.
We also give some their applications for the approximation of the discrete Fourier and Melin transforms.
In order to do that we need the following preliminary definitions and results.

\section{Preliminaries}

Hilbert $C^*$-modules are used as the framework for Kasparov's bivariant K-theory and form the
technical underpinning for the $C^*$-algebraic approach to quantum groups. Hilbert $C^*$-modules
are very useful in the following research areas: operator K-theory, index theory for operator-valued
conditional expectations, group representation theory, the theory of $AW^*$-algebras, noncommutative
geometry, and others. Hilbert $C^*$-modules form a category in between Banach
spaces and Hilbert spaces and obey the same axioms as a Hilbert space except that the inner
product takes values in a general $C^*$-algebra rather than in the complex number $\mathbb{C}$. This simple generalization
gives a lot of trouble. Fundamental and familiar Hilbert space properties like Pythagoras' equality, self-duality and
decomposition into orthogonal complements must be given up. Moreover,
a bounded module map between Hilbert $C^*$-modules does not need to have an adjoint; not every
adjointable operator needs to have a polar decomposition. Hence to get its applications, we have to
use it with great care.

A proper $H^*$-algebra is a complex Banach $*$-algebra $(\mathcal{A}, \|.\|)$ where the underlying
Banach space is a Hilbert space with respect to the inner product $\langle .,.\rangle$ satisfying the properties
$\langle ab,c\rangle=\langle b,a^*c\rangle=\langle a,cb^*\rangle$ for all
$a, b, c \in\mathcal{A}$. A $C^*$-algebra is a complex
Banach $*$-algebra $(\mathcal{A}, \|.\|)$ such that $\|a^*a\|=\|a\|^2$ for every $a \in \mathcal{A}$.
If $\mathcal{A}$ is a proper $H^*$-algebra or a $C^*$-algebra and $a\in \mathcal{A}$ is such that $\mathcal{A}a = 0$ or
$a\mathcal{A} =0$ then $a=0$.
An element $a$ in a proper $H^*$-algebra $\mathcal{A}$ is called positive $(a\geq 0)$ if $\langle ax,x\rangle\geq 0$
for every $x\in \mathcal{A}$. Every positive element $a$ in a proper $H^*$-algebra is self-adjoint
(that is $a^* = a$). An element $a$ in a $C^*$-algebra $\mathcal{A}$ is called positive $(a\geq 0)$ if it is
self-adjoint and has positive spectrum. An element $a^*a$ is positive for every $a\in \mathcal{A}$,
in both structures.

For a proper $H^*$-algebra $\mathcal{A}$, the trace class associated with $\mathcal{A}$ is
$\tau(\mathcal{A}) = \{ab : a, b \in \mathcal{A}\}$. It is a self-adjoint two-sided ideal of $\mathcal{A}$ which is dense in $\mathcal{A}$.
For every positive $a\in \tau(\mathcal{A})$ there exists the square root of $a$,
that is, a unique positive $a^{\frac{1}{2}} \in \mathcal{A}$ such that $\big(a^{\frac{1}{2}}\big)^2 = a$, the square root of $a^*a$ is
denoted by $|a|$. There are a positive linear
functional $tr$ on $\tau(\mathcal{A})$ and a norm $\tau$ on $\tau(\mathcal{A})$, related to the norm of $\mathcal{A}$ by the equality
$tr(a^*a) = \tau (a^*a) = \|a\|^2 $ for every $a \in \mathcal{A}$.
The trace-class is a Banach $*$-algebra with respect to the norm $\tau(.)$ defined
by $\tau(a) = tr(|a|)$. Let us mention that $|tr(a)|\leq  \tau(a)$ and $\|a\|\leq \tau(a)$ for every
$a\in \tau(\mathcal{A})$.

Let $\mathcal{A}$ be a proper $H^*$-algebra or a $C^*$-algebra. A semi-inner product module
over $\mathcal{A}$ is a right module $X$ over $\mathcal{A}$ together with a generalized semi-inner product,
that is, with a mapping $\langle.,.\rangle$ on $X\times X$, which is $\tau(\mathcal{A})$-valued if $\mathcal{A}$ is a proper
$H^*$-algebra, or $\mathcal{A}$-valued if $\mathcal{A}$ is a $C^*$-algebra, has the following properties:
\begin{enumerate}
\item[(i)]
 $\langle x,y+z\rangle=\langle x,y\rangle+\langle x,z\rangle$ for all $x,y,z\in X,$
\item[(ii)]
$\left\langle x,ya\right\rangle=\left\langle x,y\right\rangle a$ for $x,y\in X, a\in \mathcal{A}$,
\item[(iii)]
$\langle x,y\rangle ^*=\langle y,x\rangle$ for all $x,y\in X$,
\item[(iv)]
$\left\langle x,x\right\rangle \geq 0$ for $x\in X$.
\end{enumerate}
We will say that $X$ is a semi-inner product $H^*$-module if $\mathcal{A}$ is a proper $H^*$-algebra and
that $X$ is a semi-inner product $C^*$-module if $\mathcal{A}$ is a $C^*$-algebra.\\
The absolute value of $x\in X$ is defined as the square root of $\langle x,x\rangle$, and it is denoted
by $|x|$.

If, in addition,
\begin{enumerate}
\item[(v)]
$\langle x,x\rangle=0$ implies $x=0$,
\end{enumerate}
then $\langle .,.\rangle$ is called a generalized inner
product and $X$ is called an inner product module over $\mathcal{A}$.
We will say that $X$ is a (semi-)inner product $H^*$-module if it is a (semi-)inner product module over a proper
$H^*$-algebra, and that $X$ is a (semi-)inner product $C^*$-module if it is a (semi-)inner
product module over a $C^*$-algebra.

As we can see, an inner product module obeys the same axioms as an ordinary
inner product space, except that the inner product takes values in a more general
structure rather than in the field of complex numbers.

If $\mathcal{A}$ is a $C^*$-algebra and $X$ is a semi-inner product $\mathcal{A}$-module, then the following Schwarz
inequality holds:
\begin{equation}
\|\langle x,y\rangle\|^2\leq \|\langle x,x\rangle\|\|\langle y,y\rangle\|~(x,y\in X)\label{2.1}
\end {equation}
($e.g.$ \cite[Lemma 15.1.3]{weg}).

It follows from the Schwarz inequality (\ref{2.1}) that $\|x\|:=\|\langle x,x\rangle\|^\frac{1}{2}\quad (x\in X)$, is a semi-norm on $X$.

If $X$ is a semi-inner product $H^*$-module, then there are two forms of the Schwarz inequality: for every $x,y\in X$
\begin{align}
& (tr\langle x,y\rangle)^2\leq tr\langle x,x\rangle tr\langle y,y\rangle  &&\text{(the weak Schwarz inequality)},\label{2.2}\\
&(\tau\langle x,y\rangle)^2\leq tr\langle x,x\rangle tr\langle y,y\rangle &&\text{(the strong Schwarz inequality)}. \label{2.3}
\end{align}
First Saworotnow in \cite{saw} proved the strong Schwarz inequality, but the direct proof of that for a semi-inner product $H^*$-module
can be found in \cite{mol}.

Weak Schwarz inequality (\ref{2.2}) implies that $\big\| |x|\big\|=\big(tr\langle x,x\rangle\big)^\frac{1}{2}~(x\in X)$,
 is a semi-norm on $X$.

Now let $\mathcal{A}$ be a $\ast $-algebra, $\varphi $ a positive linear functional on $%
\mathcal{A}$, and let $X$ be a semi-inner $\mathcal{A}$-module. We can define a
sesquilinear form on $X\times X$ by $\sigma (x,y)=\varphi \left(
\left\langle x,y\right\rangle \right) $; the Schwarz inequality for $\sigma $
implies that
\begin{equation}
\vert \varphi\langle x,y\rangle\vert^{2}\leq \varphi\langle x,x\rangle \varphi\langle y,y\rangle.  \label{2.4}
\end{equation}%
In \cite[Proposition 1, Remark 1]{gha} the authors present two other forms of the Schwarz inequality in
semi-inner $\mathcal{A}$-module $X$, one for
a positive linear functional $\varphi$ on $\mathcal{A}$:
\begin{equation}
\varphi(\langle x,y\rangle\langle y,x\rangle)
\leq \varphi \langle x,x\rangle r\langle y,y\rangle,  \label{2.5}
\end{equation}%
where $r$ is the spectral radius, and another one for a $C^*$-seminorm $\gamma$ on $\mathcal{A}$:
\begin{equation}
(\gamma\langle x,y\rangle) ^{2}\leq \gamma\langle x,x\rangle \gamma \langle y,y\rangle.  \label{2.6}
\end{equation}%

Before stating the main results, let us fix the rest of our notation. We assume that $\mathcal{A}$ is a $C^\ast $-algebra or a $H^\ast $-algebra, and assume unless stated otherwise,
throughout this paper $\overline{p}=(p_1,...,p_n)\in \mathbb{R}^n$ a
probability vector i.e.
$p_i \geq 0 \quad (i = 1, . . . , n)$ and $\sum_{i=1}^n p_i=1 $. If $X$ is a semi-inner product $\mathcal{A}$-module and
$\overline{x}=(x_1,...,x_n), \overline{y}=(y_1,...,y_n)\in X^n$ we put
$$G_{\overline{p}}(\overline{x},\overline{y})=\sum_{i=1}^n p_i \langle x_i ,y_i \rangle-
\left< \sum_{i=1}^n p_i x_i, \sum_{i=1}^n p_i y_i \right>,$$
we use $G_{\overline{p}}(\overline{x})$ instead of $G_{\overline{p}}(\overline{x},\overline{x})$.

\begin{lemma}\label{l2.1}
Let $X$ be a semi-inner product $C^*$-module or a semi-inner $H^\ast $-module, $a,b\in X$, $\overline{x}=(x_1,...,x_n), \overline{y}=(y_1,...,y_n)\in X^n, \overline{\alpha}=(\alpha_1, . . . , \alpha_n)
 \in \mathbb{K}^n$; $(\mathbb{K} = \mathbb{C},\mathbb{R})$ and $\overline{p}=(p_1,...,p_n)\in \mathbb{R}^n$ a probability vector, then
\begin{equation}
\sum_{i=1}^np_i\alpha_i x_i-\sum_{i=1}^n p_i \alpha_i \sum_{i=1}^n p_ix_i=\sum_{i=1}^np_i\Big(\alpha_i-\sum_{j=1}^np_j\alpha_j\Big)
(x_i-a),\label{2.7}
\end{equation}
and
\begin{equation}
G_{\overline{p}}(\overline{x},\overline{y})=
\sum_{i=1}^n p_i\left<x_i -a, y_i -b \right>-\left< \sum_{i=1}^n p_i (x_i -a), \sum_{i=1}^n p_i (y_i-b)\right>.\label{2.8}
\end{equation}
In particular
\begin{equation}
G_{\overline{p}}(\overline{x})=\sum_{i=1}^n p_i\left|x_i -a\right|^2-\left| \sum_{i=1}^n p_i x_i -a\right|^2
\leq\sum_{i=1}^n p_i\left|x_i -a\right|^2.\label{2.9}
\end{equation}
\end{lemma}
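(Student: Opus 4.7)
The plan is to verify each of the three identities (2.7), (2.8), (2.9) by straightforward expansion, relying on nothing beyond the defining axioms (i)--(iv) of a semi-inner product module and the fact that $\sum_{i=1}^n p_i=1$. Since $p_i\in\mathbb{R}$ and $\alpha_i\in\mathbb{K}\subseteq\mathbb{C}$ are scalars (not general algebra elements), they commute freely with everything and no subtlety arising from the non-commutativity of $\mathcal{A}$ is triggered.

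For (2.7), I would expand the right-hand side as
\begin{equation*}
\sum_{i=1}^n p_i\Bigl(\alpha_i-\sum_{j=1}^n p_j\alpha_j\Bigr)(x_i-a)
= \sum_i p_i\alpha_i x_i - \Bigl(\sum_j p_j\alpha_j\Bigr)\sum_i p_i x_i - \Bigl(\sum_i p_i\alpha_i\Bigr)a + \Bigl(\sum_j p_j\alpha_j\Bigr)\Bigl(\sum_i p_i\Bigr)a,
\end{equation*}
and then observe that the last two terms cancel because $\sum_i p_i=1$, leaving exactly the left-hand side of (2.7).

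For (2.8), I would expand $\langle x_i-a,y_i-b\rangle$ using conjugate additivity in each slot (properties (i) and (iii)) into the four pieces $\langle x_i,y_i\rangle-\langle x_i,b\rangle-\langle a,y_i\rangle+\langle a,b\rangle$, multiply by $p_i$ and sum, obtaining
\begin{equation*}
\sum_i p_i\langle x_i,y_i\rangle - \Bigl\langle \sum_i p_i x_i,b\Bigr\rangle - \Bigl\langle a,\sum_i p_i y_i\Bigr\rangle + \langle a,b\rangle.
\end{equation*}
The analogous expansion of $\langle \sum_i p_i(x_i-a),\sum_i p_i(y_i-b)\rangle$, using $\sum_i p_i=1$ to simplify $\sum_i p_i a=a$ and $\sum_i p_i b=b$, gives
\begin{equation*}
\Bigl\langle \sum_i p_i x_i,\sum_i p_i y_i\Bigr\rangle - \Bigl\langle \sum_i p_i x_i,b\Bigr\rangle - \Bigl\langle a,\sum_i p_i y_i\Bigr\rangle + \langle a,b\rangle,
\end{equation*}
and subtracting yields $G_{\overline{p}}(\overline{x},\overline{y})$ as required.

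Finally, (2.9) is immediate from (2.8): take $\overline{y}=\overline{x}$ and $b=a$ to get the equality, and then use axiom (iv), namely $|\sum_i p_i x_i -a|^2=\langle\sum_i p_i x_i -a,\sum_i p_i x_i -a\rangle\ge 0$, to drop that term and obtain the stated inequality. The argument involves no genuine obstacle; the only point requiring mild care is using properties (i)--(iii) of the semi-inner product in the correct slots when distributing the sums, and remembering that $\sum_i p_i=1$ is what makes the $a$- and $b$-terms collapse.
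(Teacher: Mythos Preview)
Your proposal is correct and follows essentially the same approach as the paper: both argue by direct expansion of the right-hand sides of (2.7) and (2.8), using $\sum_i p_i=1$ to cancel the cross-terms involving $a$ and $b$, and then derive (2.9) by specializing (2.8) to $\overline{y}=\overline{x}$, $b=a$ together with positivity of the semi-inner product.
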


\begin{proof}
For every $a\in X$ a simple calculation shows that
\begin{multline}
\sum_{i=1}^np_i\Big(\alpha_i-\sum_{j=1}^np_j\alpha_j\Big)(x_i-a)=\sum_{i=1}^np_i\alpha_i x_i-\sum_{j=1}^n p_j \alpha_j\sum_{i=1}^np_ix_i\notag\\
-a\sum_{i=1}^np_i\alpha_i+a\sum_{i=1}^np_i\sum_{j=1}^n p_j \alpha_j\\
=\sum_{i=1}^np_i\alpha_i x_i-\sum_{i=1}^n p_i \alpha_i \sum_{i=1}^n p_ix_i.
\end{multline}

For every $a,b\in X$, a simple calculation shows that

\begin{multline}
\sum_{i=1}^n p_i\left<x_i -a, y_i -b \right>-\left< \sum_{i=1}^n p_i (x_i -a), \sum_{i=1}^n p_i (y_i-b)\right>\notag\\
=\sum_{i=1}^n p_i\big(\langle x_i,y_i\rangle-\langle x_i,b\rangle-\langle a,y_i\rangle+\langle a,b\rangle\big)\\
-\left< \sum_{i=1}^n p_ix_i -a, \sum_{i=1}^n p_iy_i-b\right>\\
=\sum_{i=1}^n p_i \langle x_i ,y_i \rangle-\left< \sum_{i=1}^n p_i x_i, \sum_{i=1}^n p_i y_i \right>=
G_{\overline{p}}(\overline{x},\overline{y}).
\end{multline}
In particular for $a=b, x_i=y_i$ we have
\begin{multline}
G_{\overline{p}}(\overline{x})=\sum_{i=1}^n p_i\left<x_i -a, x_i -a \right>-\left< \sum_{i=1}^n p_i (x_i -a), \sum_{i=1}^n p_i (x_i-a)\right>\notag\\
=\sum_{i=1}^n p_i\left|x_i -a\right|^2-\left| \sum_{i=1}^n p_i (x_i -a)\right|^2\leq
\sum_{i=1}^n p_i\left|x_i -a\right|^2\\
\end{multline}

\end{proof}

\section{Gr\"{u}ss type inequalities in semi-inner product $C^*$-modules}

In the following Theorem we give a generalization of Theorem \ref{t1.1} for semi-inner product
$C^*$-modules.

\begin{theorem}
\label{t3.1} Let $X$ be a semi-inner product $C^*$-module, $a,b\in X$ and $\overline{p}=(p_1,...,p_n)\in \mathbb{R}^n$ a probability vector. If $\overline{x}=(x_1,...,x_n), \overline{y}=(y_1,...,y_n)\in X^n$,
then the following inequality holds
\begin{multline}
\left\|\sum_{i=1}^n p_i\left<x_i, y_i \right>-\left< \sum_{i=1}^n p_i x_i, \sum_{i=1}^n p_i y_i\right>\right\|^2\\
\leq \left\|\sum_{i=1}^n p_i\left|x_i -a\right|^2-\left| \sum_{i=1}^n p_i x_i -a\right|^2\right\|\label{3.1}
\left\|\sum_{i=1}^n p_i\left|y_i -b\right|^2-\left| \sum_{i=1}^n p_i y_i -b\right|^2\right\|\\
\leq\left(\sum_{i=1}^n p_i\left\|x_i -a\right\|^2\right)\left(\sum_{i=1}^n p_i\left\|y_i -b\right\|^2\right)
\end{multline}
\end{theorem}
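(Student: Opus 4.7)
The plan is to deduce the first inequality from the $C^*$-module Schwarz inequality (\ref{2.1}) applied to an auxiliary semi-inner product on $X^n$, and the second inequality from the coarse majorization at the end of (\ref{2.9}).

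First, identity (\ref{2.8}) of \lemref{l2.1} together with a short direct expansion gives
\[
G_{\overline{p}}(\overline{x},\overline{y}) = \sum_{i=1}^n p_i\langle x_i - m,\, y_i - \ell\rangle,
\]
where $m := \sum_{j=1}^n p_j x_j$ and $\ell := \sum_{j=1}^n p_j y_j$. On the right $\mathcal{A}$-module $X^n$ (with componentwise action) the form $[\overline{u}, \overline{v}] := \sum_{i=1}^n p_i \langle u_i, v_i\rangle$ satisfies the four axioms of a generalized semi-inner product; positivity is immediate from $[\overline{u},\overline{u}] = \sum_i p_i |u_i|^2 \geq 0$. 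Applying the Schwarz inequality (\ref{2.1}) to $[\cdot,\cdot]$ with $u_i = x_i - m$, $v_i = y_i - \ell$ yields
\[
\|G_{\overline{p}}(\overline{x},\overline{y})\|^2 \leq \left\|\sum_{i=1}^n p_i|x_i - m|^2\right\| \cdot \left\|\sum_{i=1}^n p_i|y_i - \ell|^2\right\|.
\]

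Next, by (\ref{2.9}) the quantity $\sum_i p_i|x_i - a|^2 - |\sum_i p_i x_i - a|^2$ equals $G_{\overline{p}}(\overline{x})$ and is therefore independent of $a \in X$; taking $a = m$ makes the subtracted square vanish, so
\[
\sum_{i=1}^n p_i|x_i - m|^2 = \sum_{i=1}^n p_i|x_i - a|^2 - \Big|\sum_{i=1}^n p_i x_i - a\Big|^2
\]
for every $a$, and likewise for $\overline{y}$ and $b$. Substituting into the previous display gives the first inequality of (\ref{3.1}).

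For the second inequality, the terminal estimate in (\ref{2.9}) says that, as positive elements of $\mathcal{A}$, $\sum_i p_i|x_i - a|^2 - |\sum_i p_i x_i - a|^2 \leq \sum_i p_i|x_i - a|^2$. Monotonicity of the $C^*$-norm on the positive cone together with the triangle inequality and the identity $\||z|^2\| = \|\langle z, z\rangle\| = \|z\|^2$ then give
\[
\left\|\sum_{i=1}^n p_i|x_i - a|^2 - \Big|\sum_{i=1}^n p_i x_i - a\Big|^2\right\| \leq \sum_{i=1}^n p_i\|x_i - a\|^2,
\]
and multiplying this with its $\overline{y}$-analogue completes the estimate. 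The only substantive step is the Cauchy--Schwarz application; the rest is direct bookkeeping with \lemref{l2.1}. The main obstacle, if any, is convincing oneself that the proof of (\ref{2.1}) transfers verbatim to the form $[\cdot,\cdot]$ on $X^n$, but this is routine because the standard $\lambda$-trick depends only on axioms (i)--(iv), all of which hold for $[\cdot,\cdot]$.
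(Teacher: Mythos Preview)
Your proof is correct and follows essentially the same line as the paper's: both reduce the first inequality to the $C^*$-module Schwarz inequality (\ref{2.1}) for an auxiliary $\mathcal{A}$-valued semi-inner product on $X^n$, and then invoke (\ref{2.9}) for the identification with the middle term and for the second inequality. The only cosmetic difference is that the paper applies Schwarz directly to $G_{\overline{p}}(\cdot,\cdot)$, verifying its positivity via the double-sum identity $G_{\overline{p}}(\overline{x})=\tfrac{1}{2}\sum_{i,j}p_ip_j|x_i-x_j|^2$, whereas you center at the means and apply Schwarz to the simpler form $[\overline{u},\overline{v}]=\sum_i p_i\langle u_i,v_i\rangle$, whose positivity is immediate.
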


\begin{proof}
A simple calculation shows that
\begin{equation}
\sum_{i=1}^n p_i \langle x_i ,y_i \rangle-\left< \sum_{i=1}^n p_i x_i, \sum_{i=1}^n p_i y_i \right>=\frac{1}{2}
\sum_{i,j=1}^n p_ip_j\left<x_i -x_j ,y_i -y_j\right>,\label{3.2}
\end{equation}
therefore
\begin{equation}
G_{\overline{p}}(\overline{x})=\frac{1}{2}\sum_{i,j=1}^n p_ip_j\left<x_i -x_j ,x_i -x_j\right>\geq 0.\label{3.3}
\end{equation}
It is easy to show that $G_{\overline{p}}(\cdot,\cdot)$ is an $\mathcal{A}$-value semi-inner product on $X^n$,
so Schwarz inequality holds i.e.,
\begin{equation}
\|G_{\overline{p}}(\overline{x},\overline{y})\|^2\leq \|G_{\overline{p}}(\overline{x})\|
\|G_{\overline{p}}(\overline{y})\|.\label{3.4}
\end{equation}
From inequality (\ref{2.9}) we get
\begin{equation}
\|G_{\overline{p}}(\overline{x})\|=\left\|\sum_{i=1}^n p_i\left|x_i -a\right|^2-\left| \sum_{i=1}^n p_i x_i -a\right|^2\right\|
\leq \sum_{i=1}^n p_i\left\|x_i -a\right\|^2.\label{3.5}
\end{equation}
similarly
\begin{equation}
\|G_{\overline{p}}(\overline{y})\|=\left\|\sum_{i=1}^n p_i\left|y_i -b\right|^2-\left| \sum_{i=1}^n p_i y_i -b\right|^2\right\|
\leq \sum_{i=1}^n p_i\left\|y_i -b\right\|^2.\label{3.6}
\end{equation}
Therefore we obtain the inequality (\ref{3.1}).
\end{proof}

\begin{corollary}
\label{c3.1} Let $X$ be a semi-inner product $C^*$-module, $a,b\in X$ and $\overline{p}=(p_1,...,p_n)\in \mathbb{R}^n$ a probability vector. If $\overline{x}=(x_1,...,x_n), \overline{y}=(y_1,...,y_n)\in X^n$, $r\geq 0, s\geq 0$ are such that
\begin{equation}
\|x_i-a\|\leq r,\quad \|y_i-b\|\leq s,\text{ for all i }\in \{1, . . . , n\},\label{3.7}
\end{equation}
then the following inequality holds
\begin{equation}
\left\|\sum_{i=1}^n p_i\left<x_i, y_i \right>-\left< \sum_{i=1}^n p_i x_i, \sum_{i=1}^n p_i y_i\right>\right\|\leq rs.\label{3.8}
\end{equation}
The constant 1 coefficient of $rs$ in the inequality (\ref{3.8}) is best possible in
the sense that it cannot be replaced by a smaller quantity.
\end{corollary}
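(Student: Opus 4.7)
The plan is to derive (\ref{3.8}) by specializing the chain of inequalities already established in \thmref{t3.1}, and then to verify sharpness via a scalar example. Applying the second inequality in (\ref{3.1}) directly to $\overline{x}$ and $\overline{y}$ gives
\begin{equation*}
\|G_{\overline{p}}(\overline{x},\overline{y})\|^2 \leq \left(\sum_{i=1}^n p_i\|x_i-a\|^2\right)\left(\sum_{i=1}^n p_i\|y_i-b\|^2\right).
\end{equation*}
Under the hypothesis (\ref{3.7}), each factor on the right is bounded by $r^2\sum_{i=1}^n p_i = r^2$ and $s^2\sum_{i=1}^n p_i = s^2$ respectively, since $\overline{p}$ is a probability vector. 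Taking square roots yields (\ref{3.8}) at once.

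For the sharpness assertion, the natural approach is to reduce to the scalar case by taking $\mathcal{A} = X = \mathbb{C}$, regarded as an inner product $C^*$-module over itself with $\langle z,w\rangle = z\bar{w}$. With $n=2$, $p_1 = p_2 = \frac{1}{2}$, $a = b = 0$, and $x_1 = r,\ x_2 = -r,\ y_1 = s,\ y_2 = -s$, the hypotheses (\ref{3.7}) hold with equality, while $\sum_{i=1}^n p_i x_i = \sum_{i=1}^n p_i y_i = 0$ and $\sum_{i=1}^n p_i \langle x_i,y_i\rangle = rs$. Consequently the left-hand side of (\ref{3.8}) equals $rs$, showing that the constant $1$ cannot be replaced by any smaller quantity.

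The only mildly delicate point is exhibiting the sharpness example; the inequality itself is essentially immediate from \thmref{t3.1} combined with the identity $\sum_{i=1}^n p_i = 1$, so no serious obstacle is anticipated.
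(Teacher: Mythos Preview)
Your argument is correct and matches the paper's: the inequality is obtained exactly as you say, by combining (\ref{3.1}) with the hypothesis (\ref{3.7}) and $\sum_i p_i=1$. For sharpness the paper argues inside an arbitrary $X$, choosing any $e\in X$ with $\|\langle e,e\rangle\|=1$ and setting $x_1=a+re$, $x_2=a-re$, $y_1=b+se$, $y_2=b-se$ with $p_1=p_2=\tfrac12$; your example is precisely this construction specialized to $X=\mathbb{C}$, $e=1$, $a=b=0$, so the approaches coincide. One small cosmetic point: with the paper's conventions (property~(ii) and Remark~\ref{r3.1}(i)) the $C^*$-module inner product on $\mathbb{C}$ is $\langle z,w\rangle=\bar z\,w$, not $z\bar w$; since your example uses real entries this does not affect the computation.
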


\begin{proof}
From inequalities (\ref{3.1}) and (\ref{3.7}) we obtain (\ref{3.8}).

To prove the sharpness of the constant 1 in the inequality in (\ref{3.8}), let us assume that, under the assumptions of the theorem, the
inequalities hold with a constant $c>0$, i.e.,

\begin{equation}
\|G_{\overline{p}}(\overline{x},\overline{y})\|\leq crs \label{3.9}.
 \end{equation}
Assume that $n = 2, p_1 = p_2 = \frac{1}{2}$ and $e$ is an element of $X$ such that $\|\langle e,e\rangle\|=1$.
We put
\begin{align*}
x_1&=a+re,~~~y_1=b+se\\
x_2&=a-re,~~~y_2=b-se,
\end{align*}
then, obviously,
\begin{equation*}
\|x_i-a\|\leq r,\quad \|y_i-b\|\leq s,\quad (i=1,2),
\end{equation*}
which shows that the condition (\ref{3.7}) holds. If we replace $n, p_1, p_2, x_1, x_2, y_1, y_2$ in (\ref{3.9}), we obtain
\begin{equation*}
\|G_{\overline{p}}(\overline{x},\overline{y})\|=rs\leq crs,
\end{equation*}
from where we deduce that $c\geq 1$, which proves the sharpness of the constant 1.
\end{proof}

The following Remark \ref{r3.1}(ii) is a generalization of Theorem \ref{t1.2} for semi-inner product $C^*$-modules.

\begin{remark}\label{r3.1}\rm{
\begin{enumerate}
\item[(i)]
Let $\mathcal{A}$ be a $C^{\ast}$-algebra, and $\overline{p}=(p_1,...,p_n)\in \mathbb{R}^n$ a probability vector. If $a, b, a_i, b_i, (i=1,2,...,n) \in \mathcal{A}, r\geq 0, s\geq0$ are such that
\begin{equation*}
\|a_i-a\|\leq r,~~\|b_i-b\|\leq s, \text{ for all i }\in \{1, . . . , n\},
\end{equation*}
it is known that $\mathcal{A}$ is a Hilbert $C^*$-module over
itself with the inner product defined by $\left\langle a,b\right\rangle :=a^{\ast }b$.
In this case (\ref{3.8}) implies that
\begin{align*}
\left\|\sum_{i=1}^n p_ia_i^*b_i-\sum_{i=1}^n p_ia_i^*.\sum_{i=1}^n p_ib_i\right\|\leq rs.
\end{align*}
Since
\begin{equation*}
\|a_i^*-a^*\|\leq r,~~ \text{ for all i }\in \{1, . . . , n\},
\end{equation*}
we deduce
\begin{align*}
\left\|\sum_{i=1}^n p_ia_ib_i-\sum_{i=1}^n p_ia_i.\sum_{i=1}^n p_ib_i\right\|\leq rs.
\end{align*}

\item[(ii)] Let $X$ be a semi-inner product $C^{\ast}$-module, $a\in X, \overline{\alpha}=(\alpha_1, . . . , \alpha_n)
 \in \mathbb{K}^n$ and $\overline{p}=(p_1,...,p_n)\in \mathbb{R}^n$ a probability vector. If $\overline{x}=(x_1,...,x_n)\in X^n, r\geq 0$ are such that
\begin{equation*}
\|x_i-a\|\leq r,\text{ for all i }\in \{1, . . . , n\},
\end{equation*}
holds, from equality (\ref{2.7}) we obtain
\begin{align}
\left\|\sum_{i=1}^np_i\alpha_i x_i-\sum_{i=1}^n p_i \alpha_i \sum_{i=1}^n p_ix_i\right\|
&\leq r\sum_{i=1}^np_i\left|\alpha_i-\sum_{i=1}^np_j\alpha_j\right|\label{3.10}\\
&\leq r\left[\sum_{i=1}^np_i|\alpha_i|^2-\left|\sum_{i=1}^np_i\alpha_i\right|^2\right]^{\frac{1}{2}}.\notag
\end{align}
The constant 1 in the first and second inequalities in (\ref{3.10}) is best possible.

\end{enumerate}}
\end{remark}

\section{Applications}
In this section we give applications of Corollary \ref{c3.1} for the approximation of some discrete transforms such as the discrete Fourier and the Melin transforms.
Let $X$ be a semi-inner product $C^*$-module on $C^*$-algebra $\mathcal{A}$ and  $x=(x_1,...,x_n), y=(y_1,...,y_n)\in X^n$. For a given $\omega\in \mathbb{R}$,
define the \textit{discrete Fourier transform}

\begin{equation}
\mathcal{F}_\omega(x)(m)=\sum_{k=1}^n \exp(2\omega imk)\times x_k,\quad m=1,...,n.\label{4.1}
\end{equation}
The element $\sum_{k=1}^n \exp(2\omega imk)\times \langle x_k, y_k\rangle$ of $\mathcal{A}$ is called
Fourier transform of the vector $(\langle x_1,y_1\rangle,...,\langle x_k,y_k\rangle)\in \mathcal{A}^n$
and will be denoted by
\begin{equation}
\mathcal{F}_\omega(x,y)(m)=\sum_{k=1}^n \exp(2\omega imk)\times \langle x_k,y_k\rangle\quad m=1,...,n.\label{4.2}
\end{equation}
The following Theorems \ref{t4.1}, \ref{t4.2} and \ref{t4.3} are generalizations of \cite[Theorems 66, 67, 68]{dra} for semi-inner product $C^*$-modules respectively.

\begin{theorem}
\label{t4.1} Let $X$ be a semi-inner product $C^*$-module, $a, b \in X$. If $x=(x_1,...,x_n), y=(y_1,...,y_n)\in X^n, r\geq 0, s\geq 0$ are such that
\begin{equation}
\|x_k-a\|\leq r,\quad \big\|\exp (2\omega imk)y_k-b\big\|\leq s,\text{ for all k,m }\in \{1, . . . , n\},\label{4.3}
\end{equation}
then the following inequality holds
\begin{equation}
\left\|\mathcal{F}_\omega(x,y)(m)-\left\langle \frac{1}{n}\sum_{k=1}^n x_k, \mathcal{F}_\omega(y)(m)\right\rangle\right\|\leq nrs,\label{4.4}
\end{equation}
for all $m\in\{1,...,n\}$.
\end{theorem}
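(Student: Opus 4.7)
The plan is to reduce Theorem~\ref{t4.1} to a direct application of Corollary~\ref{c3.1} by a suitable relabeling. Fix $m \in \{1,\ldots,n\}$ and set $\tilde{y}_k := \exp(2\omega i m k)\, y_k$ for $k = 1,\ldots,n$. The hypothesis (\ref{4.3}) then reads precisely
$$\|x_k - a\| \leq r, \qquad \|\tilde{y}_k - b\| \leq s \quad (k = 1,\ldots,n),$$
so the tuple $(\tilde{y}_1,\ldots,\tilde{y}_n)$ is the natural candidate to pair with $(x_1,\ldots,x_n)$ in Corollary~\ref{c3.1}.

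Next I would take the uniform probability vector $p_k = 1/n$ and apply Corollary~\ref{c3.1} to the pair $\overline{x}, \overline{\tilde y}$. The key ingredient is the $\mathbb{C}$-linearity of $\langle \cdot, \cdot\rangle$ in the second argument together with $\mathbb{R}$-linearity in the first: since each factor $\exp(2\omega i m k)$ is a complex scalar, one has
$$\langle x_k, \tilde{y}_k \rangle = \exp(2\omega i m k)\, \langle x_k, y_k\rangle,$$
so $\frac{1}{n}\sum_k \langle x_k, \tilde{y}_k\rangle = \frac{1}{n}\mathcal{F}_\omega(x,y)(m)$, and similarly $\frac{1}{n}\sum_k \tilde{y}_k = \frac{1}{n}\mathcal{F}_\omega(y)(m)$. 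Because $\frac{1}{n}\in\mathbb{R}$, it can be pulled out of the first slot of the inner product without conjugation, giving
$$\left\langle \tfrac{1}{n}\sum_{k=1}^n x_k,\ \tfrac{1}{n}\mathcal{F}_\omega(y)(m)\right\rangle = \tfrac{1}{n}\left\langle \tfrac{1}{n}\sum_{k=1}^n x_k,\ \mathcal{F}_\omega(y)(m)\right\rangle.$$

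Substituting these identifications into the conclusion of Corollary~\ref{c3.1} yields
$$\left\|\tfrac{1}{n}\mathcal{F}_\omega(x,y)(m) - \tfrac{1}{n}\left\langle \tfrac{1}{n}\sum_{k=1}^n x_k,\ \mathcal{F}_\omega(y)(m)\right\rangle\right\| \leq rs.$$
Multiplying both sides by $n$ delivers exactly (\ref{4.4}). The only place requiring care is the scalar bookkeeping in the two arguments of $\langle\cdot,\cdot\rangle$ (conjugate-linearity on the left, linearity on the right); once that is handled cleanly the result follows immediately, so I do not anticipate any real obstacle beyond verifying that the $\exp(2\omega i m k)$ factors come out of the appropriate slot without introducing an unwanted complex conjugate.
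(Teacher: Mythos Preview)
Your approach is correct and coincides with the paper's own proof: apply Corollary~\ref{c3.1} with the uniform weights $p_k=\tfrac{1}{n}$ to the tuples $x_k$ and $\exp(2\omega imk)y_k$, then clear the factor $\tfrac{1}{n}$. The only addition you make is the explicit check that the complex scalars $\exp(2\omega imk)$ pass through the second slot of $\langle\cdot,\cdot\rangle$ without conjugation, which the paper leaves implicit.
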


\begin{proof}
By Corollary \ref{c3.1} applied for $p_k =\frac{1}{n}$ and for the
vectors $x_k$ and $\exp (2\omega imk)y_k (k = 1, . . . , n)$, we get
\begin{multline*}
\left\|\mathcal{F}_\omega(x,y)(m)-\left\langle \frac{1}{n}\sum_{k=1}^n x_k, \mathcal{F}_\omega(y)(m)\right\rangle\right\|\\
=n\left\|\sum_{k=1}^n\frac{1}{n}\Big\langle x_k, \exp (2\omega imk)y_k\Big\rangle-\left\langle\sum_{k=1}^n\frac{1}{n}x_k,\sum_{k=1}^n\frac{1}{n}\exp (2\omega imk)y_k\right\rangle\right\|\leq nrs.
\end{multline*}
\end{proof}

We can also consider the \textit{Mellin transform}
\begin{equation}
\mathcal{M}(x)(m)=\sum_{k=1}^n k^{m-1}x_k, \quad m=1,...,n.\label{4.5}
\end{equation}
of the vector $x=(x_1,...,x_n) \in X^n$.\\
The Mellin transform of the vector $(\langle x_1,y_1\rangle,...,\langle x_k,y_k\rangle)\in \mathcal{A}^n$ is defined by\\
$\sum_{k=1}^n k^{m-1}\langle x_k,y_k\rangle$ and will
be denoted by
\begin{equation}
\mathcal{M}(x,y)(m)=\sum_{k=1}^n k^{m-1}\langle x_k,y_k\rangle.\label{4.6}
\end{equation}

\begin{theorem}
\label{t4.2} Let $X$ be a semi-inner product $C^*$-module, $a,b\in X$. If $x=(x_1,...,x_n), y=(y_1,...,y_n)\in X^n, r\geq 0, s\geq 0$ are such that
\begin{equation}
\|x_k-a\|\leq r,\quad \left\|k^{m-1}y_k-b\right\|\leq s,\text{ for all k,m }\in \{1, . . . , n\},\label{4.7}
\end{equation}
then the inequality
\begin{equation}
\left\|\mathcal{M}(x,y)(m)-\left\langle \frac{1}{n}\sum_{k=1}^n x_k, \mathcal{M}(y)(m)\right\rangle\right\|\leq nrs,\label{4.8}
\end{equation}
holds for all $m\in\{1, . . . , n\}$.
\end{theorem}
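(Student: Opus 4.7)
The plan is to mirror the proof of Theorem~\ref{t4.1}, substituting the Mellin kernel $k^{m-1}$ for the Fourier kernel $\exp(2\omega imk)$. Fix $m\in\{1,\ldots,n\}$ and apply Corollary~\ref{c3.1} with uniform weights $p_k=\tfrac{1}{n}$ to the two $n$-tuples $\overline{x}=(x_1,\ldots,x_n)$ and $\overline{z}=(k^{m-1}y_k)_{k=1}^n$. Hypothesis (\ref{4.7}) is precisely the condition (\ref{3.7}) required of these tuples with constants $r$ and $s$, so the corollary yields $\|G_{\overline{p}}(\overline{x},\overline{z})\|\leq rs$.

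The next step is to identify this Gr\"{u}ss functional with the quantity appearing in (\ref{4.8}). Using right $\mathcal{A}$-linearity of the semi-inner product (property (ii)), one has $\langle x_k,k^{m-1}y_k\rangle=k^{m-1}\langle x_k,y_k\rangle$, and the scalar $\tfrac{1}{n}$ may similarly be pulled out of the second argument of the inner product. Hence
\begin{multline*}
G_{\overline{p}}(\overline{x},\overline{z})=\frac{1}{n}\sum_{k=1}^n k^{m-1}\langle x_k,y_k\rangle-\left\langle \frac{1}{n}\sum_{k=1}^n x_k,\,\frac{1}{n}\sum_{k=1}^n k^{m-1}y_k\right\rangle\\
=\frac{1}{n}\mathcal{M}(x,y)(m)-\frac{1}{n}\left\langle \frac{1}{n}\sum_{k=1}^n x_k,\,\mathcal{M}(y)(m)\right\rangle.
\end{multline*}
Multiplying the bound from Corollary~\ref{c3.1} through by $n$ then delivers (\ref{4.8}).

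Because the argument is structurally identical to the proof of Theorem~\ref{t4.1}, I do not expect any genuine obstacle; the only small bookkeeping issue is tracking the scalar $\tfrac{1}{n}$ through the second slot of the semi-inner product via property (ii), which is entirely routine.
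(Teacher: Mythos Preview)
Your proposal is correct and is exactly the approach the paper takes: apply Corollary~\ref{c3.1} with $p_k=\tfrac{1}{n}$ to the tuples $(x_k)$ and $(k^{m-1}y_k)$, then rescale by $n$. The paper omits the details you have supplied, so your write-up is in fact more complete than the original.
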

The proof follows by Corollary \ref{c3.1} applied for $p_k =\frac{1}{n}$ and for the
vectors $x_k $ and $k^{m-1}y_k (k = 1, . . . , n)$. We omit the details.

Another result which connects the Fourier transforms for different
parameters $\omega$ also holds.

\begin{theorem}\label{t4.3}
 Let $X$ be a semi-inner product $C^*$-module, $a,b\in X$. If $x=(x_1,...,x_n), y=(y_1,...,y_n)\in X^n, r\geq 0, s\geq 0$ are such that
for all $k,m \in \{1, . . . , n\}$,
\begin{equation}
\big\|\exp (2\omega_1 imk)x_k-a\big\|\leq r,~ \big\|\exp (2\omega_2 imk)y_k-b\big\|\leq s,\label{4.9}
\end{equation}
then for all $m\in\{1,...,n\}$, the following inequality holds
\begin{equation}
\left\|\frac{1}{n}\mathcal{F}_{\omega_2-\omega_1}(x,y)(m)-\left\langle \frac{1}{n}\mathcal{F}_{\omega_1}(x)(m), \frac{1}{n}\mathcal{F}_{\omega_2}(y)(m)\right\rangle\right\|\leq rs.\label{4.10}
\end{equation}

\end{theorem}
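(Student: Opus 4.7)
The plan is to apply Corollary~\ref{c3.1} to the \emph{rotated} vectors obtained by absorbing the exponential phases into the $x_k$ and $y_k$. Concretely, for a fixed $m \in \{1,\dots,n\}$, set
\[
u_k := \exp(2\omega_1 imk)\,x_k, \qquad v_k := \exp(2\omega_2 imk)\,y_k, \qquad p_k := \tfrac{1}{n},
\]
for $k=1,\dots,n$. Hypothesis (\ref{4.9}) is precisely $\|u_k - a\| \le r$ and $\|v_k - b\| \le s$, so the assumption (\ref{3.7}) of Corollary~\ref{c3.1} is satisfied for these data.

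Next I would compute the three quantities appearing in the conclusion of Corollary~\ref{c3.1} and identify them with the transforms in (\ref{4.10}). The two linear averages are immediate:
\[
\sum_{k=1}^n p_k u_k = \tfrac{1}{n}\mathcal{F}_{\omega_1}(x)(m), \qquad \sum_{k=1}^n p_k v_k = \tfrac{1}{n}\mathcal{F}_{\omega_2}(y)(m).
\]
For the sum of inner products I would use the sesquilinearity of $\langle\cdot,\cdot\rangle$: property~(iii) combined with (ii) gives conjugate linearity in the first slot, so that for scalars $\lambda,\mu\in\mathbb{C}$ we have $\langle \lambda x_k, \mu y_k\rangle = \overline{\lambda}\,\mu\,\langle x_k, y_k\rangle$. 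Applied with $\lambda = \exp(2\omega_1 imk)$ and $\mu = \exp(2\omega_2 imk)$, the phases combine to $\exp(2(\omega_2-\omega_1)imk)$, whence
\[
\sum_{k=1}^n p_k\langle u_k, v_k\rangle = \tfrac{1}{n}\sum_{k=1}^n \exp(2(\omega_2-\omega_1)imk)\,\langle x_k, y_k\rangle = \tfrac{1}{n}\mathcal{F}_{\omega_2-\omega_1}(x,y)(m).
\]

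Substituting these three identities into the estimate (\ref{3.8}) of Corollary~\ref{c3.1} yields exactly (\ref{4.10}). There is no real obstacle here: the only point one has to be a little careful about is the direction of conjugation in the inner product, which is why the combined phase ends up as $\omega_2-\omega_1$ rather than $\omega_1+\omega_2$; once that sign is sorted out, the result is a direct corollary of (\ref{3.8}).
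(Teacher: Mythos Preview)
Your proposal is correct and is exactly the approach the paper indicates: apply the Gr\"uss-type bound (Corollary~\ref{c3.1}, equivalently Theorem~\ref{t3.1}) with $p_k=\tfrac1n$ to the rotated vectors $\exp(2\omega_1 imk)x_k$ and $\exp(2\omega_2 imk)y_k$, then identify the resulting averages with the Fourier transforms. The paper omits all details, so your explicit check of the sesquilinearity and the resulting $\omega_2-\omega_1$ phase is a welcome addition rather than a deviation.
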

The proof follows by Theorem \ref{t3.1} applied for $p_k =\frac{1}{n}$ and for the
vectors $ \exp (2\omega_1imk) x_k$ and $\exp(2\omega_2imk)y_k (k = 1, . . . , n)$. We omit the details.

Let $X$ be a semi-inner product $C^{\ast}$-module, $x=(x_1,...,x_n), \alpha=(\alpha_1, . . . , \alpha_n)
 \in \mathbb{C}^n$ and $p=(p_1,...,p_n)\in \mathbb{R}^n$ a probability vector. If $a\in X, r\geq 0,$ such that
\begin{equation*}
\|x_i-a\|\leq r,\text{ for all i }\in \{1, . . . , n\},
\end{equation*}
holds, from equality (\ref{2.7}) in Lemma \ref{l2.1} we get
\begin{align}
\left\|\sum_{i=1}^np_i\alpha_i x_i-\sum_{i=1}^n p_i \alpha_i \sum_{i=1}^n p_ix_i\right\|
&\leq r\sum_{i=1}^np_i\left|\alpha_i-\sum_{i=1}^np_j\alpha_j\right|\label{4.11}\\
&\leq r\left[\sum_{i=1}^np_i|\alpha_i|^2-\left|\sum_{i=1}^np_i\alpha_i\right|^2\right]^{\frac{1}{2}}.\notag
\end{align}
The constant 1 coefficient of $r$ in the first and second inequalities in (\ref{4.11}) is best possible.

The following approximation result for the Fourier transform (\ref{4.1}) which is a generalization of \cite[Theorem 3]{dra1}
in semi-inner product $C^*$-modules holds.

\begin{proposition}
Let $X$ be a semi-inner product $C^*$-module, $a\in X$. If $x=(x_1,...,x_n)\in X^n, r\geq 0$ are such that
\begin{equation*}
\|x_i-a\|\leq r,\text{ for all i }\in \{1, . . . , n\},
\end{equation*}
then for all $m\in\{1,...,n\}$ and $\omega\in \mathbb{R}, \omega\neq\frac{l}{m}\pi, l\in\mathbb{Z}$
 the following inequality holds
\begin{multline}
\left\|\mathcal{F}_\omega(x)(m)-\frac{\sin(\omega mn)}{\sin(\omega m)}\exp[\omega(n+1)im]\times \frac{1}{n}\sum_{k=1}^n x_k\right\|\label{4.12}\\
\leq r\left[n^2-\frac{\sin^2(\omega mn)}{\sin^2(\omega m)}\right]^{\frac{1}{2}}.
\end{multline}
\end{proposition}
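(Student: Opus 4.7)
The plan is to apply inequality (\ref{4.11}) (equivalently, Remark \ref{r3.1}(ii)) with the uniform probability vector $p_k=\frac{1}{n}$ and with the scalar weights
\[
\alpha_k = \exp(2\omega imk), \qquad k=1,\ldots,n,
\]
so that the left-hand side of (\ref{4.11}) becomes, after multiplying through by $n$, exactly the expression appearing on the left of (\ref{4.12}). The key identification is
\[
\sum_{k=1}^n p_k\alpha_k x_k \;=\; \tfrac{1}{n}\mathcal{F}_\omega(x)(m),
\]
and the job reduces to evaluating the scalar $\sum_{k=1}^n p_k\alpha_k$ in closed form and then bounding the second quantity in (\ref{4.11}).

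First I would compute the geometric sum $\sum_{k=1}^n \exp(2\omega imk)$. Setting $q=\exp(2\omega im)$, the hypothesis $\omega\neq\frac{l}{m}\pi$ for $l\in\mathbb{Z}$ guarantees $q\neq 1$, so
\[
\sum_{k=1}^n q^k \;=\; q\,\frac{q^n-1}{q-1}.
\]
Applying $e^{i\theta}-e^{-i\theta}=2i\sin\theta$ to both numerator and denominator (after factoring $e^{\omega imn}$ from the top and $e^{\omega im}$ from the bottom) yields
\[
\sum_{k=1}^n \exp(2\omega imk)=\exp[\omega(n+1)im]\,\frac{\sin(\omega mn)}{\sin(\omega m)}.
\]
Dividing by $n$, this identifies the ``mean'' term in (\ref{4.11}) with the quantity subtracted in (\ref{4.12}), modulo the factor $\frac{1}{n}\sum x_k$ produced by $\sum p_k x_k$.

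Next I would bound the right-hand side of (\ref{4.11}) by the weaker $\ell^2$-form. Since $|\alpha_k|^2=1$ we have $\sum p_k|\alpha_k|^2=1$, while from the closed form above
\[
\left|\sum_{k=1}^n p_k\alpha_k\right|^2 \;=\; \frac{1}{n^2}\,\frac{\sin^2(\omega mn)}{\sin^2(\omega m)}.
\]
Hence
\[
r\Bigl[\sum_{k=1}^n p_k|\alpha_k|^2-\Bigl|\sum_{k=1}^n p_k\alpha_k\Bigr|^2\Bigr]^{1/2}
\;=\;\frac{r}{n}\Bigl[n^2-\frac{\sin^2(\omega mn)}{\sin^2(\omega m)}\Bigr]^{1/2}.
\]
Multiplying (\ref{4.11}) through by $n$ (which absorbs the $\frac{1}{n}$'s from $p_k=\frac{1}{n}$ on the left-hand side and the $\frac{1}{n}$ on the right-hand side) produces exactly (\ref{4.12}).

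The proof contains no real obstacle beyond correctly carrying out the geometric-series identity; the only subtlety to double-check is the exponent $\omega(n+1)im$ in the closed form (to make sure the factor $q$ in front of $(q^n-1)/(q-1)$ is properly combined with $\exp[\omega(n-1)im]$ coming from the symmetric factorizations of numerator and denominator). Once that identity is in hand, the inequality is a direct specialization of (\ref{4.11}) and needs no further argument.
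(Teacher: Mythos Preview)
Your proposal is correct and follows essentially the same approach as the paper: apply (\ref{4.11}) with $p_k=\tfrac{1}{n}$ and $\alpha_k=\exp(2\omega imk)$, use the closed form for the geometric sum $\sum_{k=1}^n \exp(2\omega imk)$, then multiply through by $n$. You supply more detail on the geometric-series identity than the paper (which simply cites it), but the logical route is identical.
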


\begin{proof}
From the inequality (\ref{4.11}) we can state that,
\begin{equation}
\left\|\frac{1}{n}\sum_{i=1}^n\alpha_i x_i-\frac{1}{n}\sum_{i=1}^n \alpha_i .\frac{1}{n}\sum_{i=1}^n x_i\right\|
\leq r\left[\frac{1}{n}\sum_{i=1}^n|\alpha_i|^2-\left|\frac{1}{n}\sum_{i=1}^n\alpha_i\right|^2\right]^{\frac{1}{2}},\label{4.13}
\end{equation}
for all $\alpha_i \in \mathbb{C},~ x_i\in X ~(i=1,...,n)$.
Consequently, we conclude that
\begin{equation}
\left\|\sum_{i=1}^n\alpha_i x_i-\sum_{i=1}^n \alpha_i .\frac{1}{n}\sum_{i=1}^n x_i\right\|
\leq r\left[n\sum_{i=1}^n|\alpha_i|^2-\left|\sum_{i=1}^n\alpha_i\right|^2\right]^{\frac{1}{2}}.\label{4.14}
\end{equation}
A simple calculation shows that (see the proof of Theorem 59 in \cite{dra}),
\begin{equation*}
\sum_{k=1}^n \exp(2\omega imk)=\frac{\sin(\omega mn)}{\sin(\omega m)}\times \exp[\omega (n+1)im].
\end{equation*}
Putting $\alpha_k=\exp(2\omega imk)$, we get the desired result (\ref{4.12}).

\end{proof}
The following approximation result for the Mellin transform (\ref{4.5}) in semi-inner product $C^*$-modules holds, (see \cite[Theorem 4]{dra1}).

\begin{proposition}
Let $X$ be a semi-inner product $C^*$-module, $a\in X$. If $x=(x_1,...,x_n)\in X^n, r\geq 0$ are such that
\begin{equation*}
\|x_i-a\|\leq r,\text{ for all i }\in \{1, . . . , n\},
\end{equation*}
then
\begin{multline}
\Big\|\mathcal{M}(x)(m)-S_{m-1}(n).\frac{1}{n}\sum_{k=1}^n x_k\Big\|\label{4.15}\\
\leq r\big[nS_{2m-2}(n)-S_{m-1}^2(n)\big]^\frac{1}{2}, m\in\{1,...,n\},
\end{multline}
where $S_p(n), p\in \mathbb{R}, n\in \mathbb{N}$ is the $p$-powered sum of the first $n$ natural
numbers, i.e.,
\begin{equation*}
S_p(n):=\sum_{k=1}^n k^p.
\end{equation*}
\end{proposition}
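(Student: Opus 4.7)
The plan is to obtain (4.15) as a direct specialization of the scalar-weighted inequality (4.14), which has just been established. That inequality bounds $\bigl\|\sum_{i=1}^n \alpha_i x_i - \sum_{i=1}^n \alpha_i \cdot \frac{1}{n}\sum_{i=1}^n x_i\bigr\|$ for arbitrary $\alpha_i \in \mathbb{C}$ under precisely the same hypothesis $\|x_i - a\|\leq r$ that the proposition assumes, so it is the natural tool to analyze any transform of the form $\sum_k f(k)\, x_k$ approximated by the arithmetic mean $\frac{1}{n}\sum_k x_k$.

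Concretely, I would specialize by choosing $\alpha_k := k^{m-1}$ for $k = 1, \dots, n$. By definition (4.5) of the Mellin transform,
\[
\sum_{k=1}^n \alpha_k x_k \;=\; \sum_{k=1}^n k^{m-1} x_k \;=\; \mathcal{M}(x)(m),
\]
while the two scalar quantities appearing on the right-hand side of (4.14) reduce, by the definition of $S_p(n)$, to
\[
\sum_{k=1}^n \alpha_k \;=\; \sum_{k=1}^n k^{m-1} \;=\; S_{m-1}(n),
\qquad
\sum_{k=1}^n |\alpha_k|^2 \;=\; \sum_{k=1}^n k^{2m-2} \;=\; S_{2m-2}(n).
\]
Substituting these three identities into (4.14) yields precisely (4.15).

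There is essentially no substantive obstacle: the argument is a one-line specialization, entirely parallel to the Fourier approximation (4.12), where one instead took $\alpha_k = \exp(2\omega imk)$. The only mild care needed is to verify that the hypothesis $\|x_i - a\|\leq r$ transfers unchanged to (4.14), which it does since the $x_i$'s in (4.14) are the same as in the statement and no assumption is placed on the $\alpha_i$'s. Accordingly I expect the written proof to be essentially two lines, and the author's remark \emph{``We omit the details''} (as done after Theorem 4.2) would be equally appropriate here.
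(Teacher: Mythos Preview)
Your proposal is correct and matches the paper's intended approach: the paper does not write out a proof for this proposition at all (it merely points to \cite[Theorem 4]{dra1}), but the evident argument is exactly the specialization of (4.14) with $\alpha_k=k^{m-1}$, in direct parallel to the preceding Fourier proposition where $\alpha_k=\exp(2\omega imk)$ was used. Your verification of the three substitutions is accurate and nothing further is needed.
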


Consider the following particular values of Mellin Transform
\[
\mu_1(x):=\sum_{k=1}^n kx_k
\]
and
\[
\mu_2(x):=\sum_{k=1}^n k^2x_k.
\]
The following Corollary is a generalization of \cite[Corollary 4]{dra1}, furthermore, the quantities in the right hand sides of inequalities (5.5) and (5.6) in \cite[Corollary 4]{dra1} have been corrected by the following inequalities (\ref{4.16}) and (\ref{4.17}) respectively.

\begin{corollary}
Let $X$ be a semi-inner product $C^*$-module, $a\in X$. If $x=(x_1,...,x_n)\in X^n, r\geq 0$ are such that
\begin{equation*}
\|x_i-a\|\leq r,\text{ for all i }\in \{1, . . . , n\},
\end{equation*}
then
\begin{equation}
\left\|\mu_1(x)-\frac{n+1}{2}.\sum_{k=1}^n x_k\right\|\leq \frac{rn}{2} \left[\frac{(n-1)(n+1)}{3}\right]^{\frac{1}{2}},\label{4.16}
\end{equation}
and
\begin{multline}
\Big\|\mu_2(x)-\frac{(n+1)(2n+1)}{6}.\sum_{k=1}^n x_k\Big\|\\
\leq\frac{rn}{6\sqrt{5}}\sqrt{(n-1)(n+1)(2n+1)(8n+11)}.\label{4.17}
\end{multline}

\end{corollary}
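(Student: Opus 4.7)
The plan is to apply the preceding Proposition with $m=2$ and $m=3$ to the Mellin transform. By the definitions (\ref{4.5}) and (\ref{4.6}), $\mathcal{M}(x)(2)=\sum_{k=1}^n k\,x_k=\mu_1(x)$ and $\mathcal{M}(x)(3)=\sum_{k=1}^n k^2\,x_k=\mu_2(x)$, so inequality (\ref{4.15}) will immediately yield the desired bounds once I evaluate the closed forms of $S_{m-1}(n)$ and $S_{2m-2}(n)$ and perform the algebraic simplification.

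For (\ref{4.16}), I would substitute $m=2$ and use the classical identities $S_1(n)=n(n+1)/2$ and $S_2(n)=n(n+1)(2n+1)/6$. The coefficient $S_1(n)/n=(n+1)/2$ reproduces the left-hand side of (\ref{4.16}). For the right-hand side, a direct common-denominator calculation gives
\begin{equation*}
nS_2(n)-S_1(n)^2=\frac{n^2(n+1)(2n+1)}{6}-\frac{n^2(n+1)^2}{4}=\frac{n^2(n+1)(n-1)}{12},
\end{equation*}
whose square root is $\frac{n}{2}\bigl[(n-1)(n+1)/3\bigr]^{1/2}$, matching (\ref{4.16}) after multiplication by $r$.

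For (\ref{4.17}), I would substitute $m=3$ and additionally use $S_4(n)=n(n+1)(2n+1)(3n^2+3n-1)/30$. The coefficient $S_2(n)/n=(n+1)(2n+1)/6$ reproduces the left-hand side. Pulling out the common factor $n^2(n+1)(2n+1)/180$ from $nS_4(n)-S_2(n)^2$ reduces the problem to simplifying the bracket $6(3n^2+3n-1)-5(n+1)(2n+1)=8n^2+3n-11$. The one genuinely non-routine step is to recognize the factorization $8n^2+3n-11=(n-1)(8n+11)$, after which
\begin{equation*}
nS_4(n)-S_2(n)^2=\frac{n^2(n+1)(2n+1)(n-1)(8n+11)}{180},
\end{equation*}
and using $\sqrt{180}=6\sqrt{5}$ yields the stated bound. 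Everything beyond that factorization is substitution into (\ref{4.15}) and bookkeeping of classical power sums, so this is the only place that requires a moment of attention.
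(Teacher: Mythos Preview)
Your proposal is correct and is exactly the route the paper intends: the paper states this result as an immediate Corollary of the preceding Proposition on the Mellin transform (inequality (\ref{4.15})) and gives no separate proof, so specializing to $m=2$ and $m=3$ and evaluating $S_1,S_2,S_4$ is precisely what is required. Your algebra checks out, including the factorization $8n^{2}+3n-11=(n-1)(8n+11)$.
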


Other inequalities related to the Gr\"{u}ss type discrete inequalities for polynomials with coefficients in a Hilbert space
such as Theorem 61, Theorem 62, Corollary 52 in \cite{dra}, have
versions that are valid for polynomials with coefficients in a semi-inner $C^*$-module. However, the details are
omitted.


\section{Gr\"{u}ss type inequalities in semi-inner product $H^*$-modules}

The following Theorem is a version of Corollary \ref{c3.1} for $H^\ast $-modules.
\begin{theorem}
\label{t5.1} Let $X$ be a semi-inner product $H^\ast $-module, $a,b\in X$ and $\overline{p}=(p_1,...,p_n)\in \mathbb{R}^n$ a probability vector. If $\overline{x}=(x_1,...,x_n), \overline{y}=(y_1,...,y_n)\in X^n, r\geq 0, s\geq 0$ are such that
\begin{equation}
\big\||x_i-a|\big\|\leq r,\quad \big\||y_i-b|\big\|\leq s,\text{ for all i }\in \{1, . . . , n\},\label{5.1}
\end{equation}
then the following inequality holds
\begin{equation}
\big|\tau(G_{\overline{p}}(\overline{x},\overline{y}))\big|\leq rs\label{5.2}
\end{equation}
The constant 1 coefficient of $rs$ in the inequalities (\ref{5.2}) is sharp.
\end{theorem}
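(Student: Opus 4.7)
The plan is to mimic the strategy of Theorem \ref{t3.1}, replacing the $C^*$-module Schwarz inequality \eqref{2.1} by the \emph{strong} Schwarz inequality \eqref{2.3} available in the $H^*$-setting. The architecture rests on an observation already used in the proof of Theorem \ref{t3.1}: the mapping $G_{\overline{p}}(\cdot,\cdot)$ is itself a $\tau(\mathcal{A})$-valued semi-inner product on the right $\mathcal{A}$-module $X^n$ (with coordinatewise action). Conjugate-linearity and linearity in the two slots are inherited from $\langle\cdot,\cdot\rangle$, while positivity is witnessed by the Lagrange-type identity $G_{\overline{p}}(\overline{x}) = \tfrac{1}{2}\sum_{i,j=1}^n p_i p_j \langle x_i - x_j, x_i - x_j\rangle \geq 0$ displayed in \eqref{3.3}. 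Thus $(X^n, G_{\overline{p}})$ is itself a semi-inner product $H^*$-module over $\mathcal{A}$.

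With this in hand, the strong Schwarz inequality \eqref{2.3} applied in $(X^n, G_{\overline{p}})$ gives
\begin{equation*}
\bigl(\tau G_{\overline{p}}(\overline{x}, \overline{y})\bigr)^2 \leq tr\,G_{\overline{p}}(\overline{x})\cdot tr\,G_{\overline{p}}(\overline{y}).
\end{equation*}
I would then estimate each factor on the right by invoking Lemma \ref{l2.1}: the operator inequality \eqref{2.9} reads $G_{\overline{p}}(\overline{x}) \leq \sum_{i=1}^n p_i |x_i - a|^2$ in the positive cone of $\tau(\mathcal{A})$, and applying the positive linear functional $tr$ together with the identity $tr\,|x_i-a|^2 = tr\langle x_i-a, x_i-a\rangle = \bigl\|\,|x_i-a|\,\bigr\|^2 \leq r^2$ yields $tr\,G_{\overline{p}}(\overline{x}) \leq r^2$. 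The analogous bound $tr\,G_{\overline{p}}(\overline{y}) \leq s^2$ is obtained by the same computation, and combining these with the Schwarz step gives $\tau G_{\overline{p}}(\overline{x}, \overline{y}) \leq rs$, which is \eqref{5.2}.

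For the sharpness claim, I would recycle the two-point construction from Corollary \ref{c3.1}: choose any $e \in X$ with $\bigl\|\,|e|\,\bigr\|=1$, take $n=2$, $p_1=p_2=\tfrac{1}{2}$, and set $x_1=a+re$, $x_2=a-re$, $y_1=b+se$, $y_2=b-se$; the hypotheses are immediate, and the identity \eqref{3.2} evaluates to $G_{\overline{p}}(\overline{x},\overline{y}) = rs\langle e,e\rangle$. Since $\tau\langle e,e\rangle = \bigl\|\,|e|\,\bigr\|^2 = 1$, we obtain equality in \eqref{5.2}, so the coefficient $1$ of $rs$ cannot be improved.

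The only subtle point is choosing the correct substitute for the $C^*$-Schwarz step: one must use the \emph{strong} form \eqref{2.3} (with $\tau$ on the left and $tr$ on the right) rather than the weak form \eqref{2.2}, because the operator estimate \eqref{2.9} only converts into a usable scalar bound after $tr$ is applied, via the key identity $tr\,|x|^2 = \bigl\|\,|x|\,\bigr\|^2$. Once this is recognized, the rest of the argument is essentially mechanical and parallels the $C^*$-case word for word.
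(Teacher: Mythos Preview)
Your proposal is correct and follows essentially the same approach as the paper: apply the strong Schwarz inequality \eqref{2.3} to the semi-inner product $G_{\overline{p}}(\cdot,\cdot)$ on $X^n$, then bound $tr\,G_{\overline{p}}(\overline{x})$ and $tr\,G_{\overline{p}}(\overline{y})$ via \eqref{2.9} and the identity $tr\,|z|^2=\bigl\|\,|z|\,\bigr\|^2$. Your treatment of sharpness spells out the two-point construction that the paper only alludes to, but the argument is the same.
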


\begin{proof}
By strong Schwarz inequality (\ref{2.3}) we have
\begin{equation}
\tau(G_{\overline{p}}(\overline{x},\overline{y}))^2\leq tr(G_{\overline{p}}(\overline{x}))tr(G_{\overline{p}}(\overline{y})).\label{5.3}
\end{equation}
 From inequality (\ref{2.9}) in Lemma \ref{l2.1} we obtain

\begin{equation}
tr(G_{\overline{p}}(\overline{x}))\leq \sum_{i=1}^n p_i\big\||x_i-a|\big\|^2\leq r^2,\label{5.4}
\end{equation}
similarly
\begin{equation}
tr(G_{\overline{p}}(\overline{y}))\leq \sum_{i=1}^n p_i\big\||y_i-b|\big\|^2\leq s^2\label{5.5}.
\end{equation}
Now (\ref{5.3}), (\ref{5.4}) and (\ref{5.5}) imply (\ref{5.2}).

The fact that the constant 1 is sharp may be proven in a similar manner to the one embodied in the proof of Corollary \ref{c3.1}.
 We omit the details.
\end{proof}
The following companion of the Gr\"{u}ss inequality for $H^\ast $-modules.

\begin{theorem}
\label{t5.2} Let $X$ be a semi-inner product $H^\ast $-module, $a,b\in X$
and $\overline{p}=(p_1,...,p_n)\in \mathbb{R}^n$ a probability vector. If $\overline{x}=(x_1,...,x_n), \overline{y}=(y_1,...,y_n)\in X^n, r\geq 0,s\geq 0$ are such that

\begin{equation}
\big\||x_i-a|\big\|\leq r, ~~\big\||y_i-b|\big\|\leq s \text{ for all } i\in \{1,...,n\}\label{5.6}
\end{equation}

then the following inequality holds
\begin{equation}
\big|\tau(G_{\overline{p}}(\overline{x},\overline{y}))\big|\leq rs-\left\|\Big|\sum_{i=1}^n p_i(x_i-a)\Big|\right\|\left\|\Big|\sum_{i=1}^n p_i(y_i-b)\Big|\right\|\leq rs.\label{5.7}
\end{equation}
\end{theorem}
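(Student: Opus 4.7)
The plan is to piggyback on the strategy of Theorem \ref{t5.1}, improving the final bound by carrying along the ``extra'' negative term that comes out of identity (\ref{2.9}) before discarding it.

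First, exactly as in the proof of Theorem \ref{t5.1}, I would note that $G_{\overline{p}}(\cdot,\cdot)$ is an $\mathcal{A}$-valued semi-inner product on $X^n$, so the strong Schwarz inequality (\ref{2.3}) applies and yields
\begin{equation*}
\bigl|\tau\bigl(G_{\overline{p}}(\overline{x},\overline{y})\bigr)\bigr|^{2}\leq tr\bigl(G_{\overline{p}}(\overline{x})\bigr)\,tr\bigl(G_{\overline{p}}(\overline{y})\bigr).
\end{equation*}
Next, using the identity (\ref{2.9}) of Lemma \ref{l2.1} together with the fact that $tr\langle z,z\rangle=\bigl\||z|\bigr\|^{2}$, I would write
\begin{equation*}
tr\bigl(G_{\overline{p}}(\overline{x})\bigr)=\sum_{i=1}^{n}p_{i}\bigl\||x_{i}-a|\bigr\|^{2}-\Bigl\|\,\Bigl|\sum_{i=1}^{n}p_{i}(x_{i}-a)\Bigr|\,\Bigr\|^{2},
\end{equation*}
and analogously for $\overline{y}$. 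Writing $U:=\bigl\|\,|\sum_{i}p_{i}(x_{i}-a)|\,\bigr\|$ and $V:=\bigl\|\,|\sum_{i}p_{i}(y_{i}-b)|\,\bigr\|$ and invoking the hypothesis (\ref{5.6}) to bound $\sum p_{i}\bigl\||x_{i}-a|\bigr\|^{2}\leq r^{2}$ and $\sum p_{i}\bigl\||y_{i}-b|\bigr\|^{2}\leq s^{2}$, I obtain
\begin{equation*}
\bigl|\tau\bigl(G_{\overline{p}}(\overline{x},\overline{y})\bigr)\bigr|^{2}\leq (r^{2}-U^{2})(s^{2}-V^{2}).
\end{equation*}

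The final step is the elementary numerical inequality $(r^{2}-U^{2})(s^{2}-V^{2})\leq (rs-UV)^{2}$, which, after expansion, reduces to $(rV-sU)^{2}\geq 0$. Since $U\leq r$ and $V\leq s$ force $rs\geq UV\geq 0$, I may take square roots to conclude
\begin{equation*}
\bigl|\tau\bigl(G_{\overline{p}}(\overline{x},\overline{y})\bigr)\bigr|\leq rs-UV,
\end{equation*}
which is (\ref{5.7}); the rightmost inequality $rs-UV\leq rs$ is trivial. The only step requiring any thought is recognising that (\ref{2.9}), which was only used in Theorem \ref{t5.1} to give the crude bound $tr(G_{\overline{p}}(\overline{x}))\leq r^{2}$, actually produces a nonnegative correction $U^{2}$ that can be carried through the Schwarz estimate; the arithmetic refinement $(r^{2}-U^{2})(s^{2}-V^{2})\leq (rs-UV)^{2}$ then does the remaining work, so no genuine obstacle is anticipated.
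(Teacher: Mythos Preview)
Your proposal is correct and follows essentially the same approach as the paper: apply the strong Schwarz inequality (\ref{2.3}) to $G_{\overline{p}}$, use Lemma \ref{l2.1} to express $tr(G_{\overline{p}}(\overline{x}))$ and $tr(G_{\overline{p}}(\overline{y}))$ with the correction terms $U^{2}$ and $V^{2}$, bound the sums via (\ref{5.6}), and finish with the elementary inequality $(r^{2}-U^{2})(s^{2}-V^{2})\leq (rs-UV)^{2}$. The paper phrases this last step as the Aczél-type inequality $(m^{2}-n^{2})(p^{2}-q^{2})\leq(mp-nq)^{2}$, which is the same identity you spell out via $(rV-sU)^{2}\geq 0$.
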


\begin{proof}
From equality (\ref{2.8}) in Lemma \ref{l2.1} for $\overline{y}=\overline{x}$ and $b=a$, we get
\begin{equation}
tr(G_{\overline{p}}(\overline{x}))=\sum_{i=1}^n p_i\big\||x_i -a|\big\|^2-
\left\|\Big|\sum_{i=1}^n p_i (x_i -a)\Big|\right\|^2\label{5.8}.
\end{equation}
Similarly for every $b\in X$ by substitution $\overline{x}$ with $\overline{y}$, and $a$ with $b$, we have
\begin{equation}
tr(G_{\overline{p}}(\overline{y}))=\sum_{i=1}^n p_i\big\||y_i -b|\big\|^2-
\left\|\Big|\sum_{i=1}^n p_i (y_i -b)\Big|\right\|^2\label{5.9}.
\end{equation}
By strong Schwarz inequality (\ref{2.3}) we have
\begin{multline}
\big|\tau(G_{\overline{p}}(\overline{x},\overline{y}))\big|\leq\left[\sum_{i=1}^n p_i\big\||x_i -a|\big\|^2-
\left\|\Big|\sum_{i=1}^n p_i (x_i -a)\Big|\right\|^2\right]^\frac{1}{2}\label{5.10}\\
\times\left[\sum_{i=1}^n p_i\big\||y_i -b|\big\|^2-
\left\|\Big|\sum_{i=1}^n p_i (y_i -b)\Big|\right\|^2\right]^\frac{1}{2}\\
\leq\left[r^2-\left\|\Big|\sum_{i=1}^n p_i (x_i -a)\Big|\right\|^2\right]^\frac{1}{2}\left[s^2-\left\|\Big|\sum_{i=1}^n p_i (y_i -b)
\Big|\right\|^2\right]^\frac{1}{2}.
\end{multline}
Now using the elementary inequality for real numbers
\begin{equation}
(m^2-n^2)(p^2-q^2)\leq(mp-nq)^2\label{5.11}
\end{equation}
on
\begin{align*}
m&=r, &&n=\left\|\Big|\sum_{i=1}^n p_i (x_i -a)\Big|\right\|,\\
p&=s, &&q=\left\|\Big|\sum_{i=1}^n p_i (y_i -b)\Big|\right\|,
\end{align*}
we get the inequality (\ref{5.7}).
\end{proof}

\begin{corollary}
\label{c5.1} Let $X$ be a semi-inner product $H^\ast $-module, $a\in X$
and $\overline{p}=(p_1,...,p_n)\in \mathbb{R}^n$ a probability vector. If $\overline{x}=(x_1,...,x_n), \overline{y}=(y_1,...,y_n)\in X^n, r\geq 0$ are such that
\begin{equation}
\big\||x_i-a|\big\|\leq r \text{ for all } i\in \{1,...,n\}\label{5.12}
\end{equation}
then the following inequality holds
\begin{equation}
\big|\tau(G_{\overline{p}}(\overline{x},\overline{y}))\big|\leq r\left[ \sum_{i=1}^n p_i\big\||y_i|\big\|^2
-\left\|\Big|\sum_{i=1}^n p_iy_i\Big|\right\|^2\right]^{\frac{1}{2}}.\label{5.13}
\end{equation}
\end{corollary}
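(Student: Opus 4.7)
The plan is to imitate the proof of Theorem~\ref{t5.2}, but to keep the second Gr\"uss-type factor in its unbounded form since no hypothesis of the form $\||y_i-b|\|\le s$ is available. The key algebraic identity is the particular case of Lemma~\ref{l2.1}, equation~(\ref{2.9}), applied to $\overline{y}$ with the anchor $b=0$:
\begin{equation*}
G_{\overline{p}}(\overline{y})=\sum_{i=1}^n p_i|y_i|^2-\Big|\sum_{i=1}^n p_i y_i\Big|^2 .
\end{equation*}
Applying the trace $tr$ (which is linear and, because $\langle z,z\rangle=|z|^2$ is positive with $tr\langle z,z\rangle=\||z|\|^2$ for every $z\in X$), this yields the exact formula
\begin{equation*}
tr(G_{\overline{p}}(\overline{y}))=\sum_{i=1}^n p_i\||y_i|\|^2-\Big\|\Big|\sum_{i=1}^n p_iy_i\Big|\Big\|^2,
\end{equation*}
which is precisely the square of the quantity appearing on the right-hand side of~(\ref{5.13}).

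Next I would bound $tr(G_{\overline{p}}(\overline{x}))$ using the hypothesis~(\ref{5.12}). Again by Lemma~\ref{l2.1}~(\ref{2.9}) with the anchor $a$, and applying $tr$,
\begin{equation*}
tr(G_{\overline{p}}(\overline{x}))=\sum_{i=1}^n p_i\||x_i-a|\|^2-\Big\|\Big|\sum_{i=1}^n p_i(x_i-a)\Big|\Big\|^2\le\sum_{i=1}^n p_i\||x_i-a|\|^2\le r^2,
\end{equation*}
exactly as was done in~(\ref{5.4}) inside the proof of Theorem~\ref{t5.1}.

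Finally, I would invoke the strong Schwarz inequality~(\ref{2.3}) applied to the $\mathcal{A}$-valued semi-inner product $G_{\overline{p}}(\cdot,\cdot)$ on $X^n$ (whose positive-semidefiniteness was verified around~(\ref{3.3})): this gives
\begin{equation*}
\bigl|\tau(G_{\overline{p}}(\overline{x},\overline{y}))\bigr|^2\le tr(G_{\overline{p}}(\overline{x}))\,tr(G_{\overline{p}}(\overline{y})) .
\end{equation*}
Substituting the bound $r^2$ for the first factor and the closed-form expression for the second factor and taking square roots produces~(\ref{5.13}).

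The only mildly delicate point is the justification that $G_{\overline{p}}(\cdot,\cdot)$ qualifies as a semi-inner product on $X^n$ so that the strong Schwarz inequality~(\ref{2.3}) applies; but this was observed in the $C^*$-case argument leading to~(\ref{3.4}) and the same verification carries over verbatim for $H^*$-modules, so no new work is required. All other steps are direct applications of Lemma~\ref{l2.1} and the identity $\||z|\|^2=tr\langle z,z\rangle$.
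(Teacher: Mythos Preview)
Your proof is correct and follows essentially the same route as the paper: apply the strong Schwarz inequality~(\ref{2.3}) to the semi-inner product $G_{\overline{p}}(\cdot,\cdot)$, bound $tr(G_{\overline{p}}(\overline{x}))$ by $r^2$ via Lemma~\ref{l2.1}~(\ref{2.9}) with anchor $a$, and express $tr(G_{\overline{p}}(\overline{y}))$ exactly via~(\ref{2.9}) with anchor $b=0$. The only cosmetic difference is that the paper first records the intermediate inequality with a general anchor $b$ (its~(\ref{5.15})) and then specializes to $b=0$, whereas you set $b=0$ from the outset.
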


\begin{proof}
From the equality (\ref{5.8}) and the condition (\ref{5.12}) we have
\begin{align}\label{5.14}
tr(G_{\overline{p}}(\overline{x}))&=\sum_{i=1}^n p_i\big\||x_i -a|\big\|^2-
\left\|\Big|\sum_{i=1}^n p_i (x_i -a)\Big|\right\|^2\\
&\leq \sum_{i=1}^n p_i r^2-\left\|\Big|\sum_{i=1}^n p_i (x_i -a)\Big|\right\|^2\leq r^2.\notag
\end{align}

Using the inequalities (\ref{5.3}), (\ref{5.14}) and the equality (\ref{5.9}) we get
\begin{equation}
\big|\tau(G_{\overline{p}}(\overline{x},\overline{y}))\big|\leq r\left[ \sum_{i=1}^n p_i\big\||y_i-b|\big\|^2
-\left\|\Big|\sum_{i=1}^n p_i(y_i-b)\Big|\right\|^2\right]^{\frac{1}{2}},\label{5.15}
\end{equation}
and for $b=0$ we get the inequality (\ref{5.13}).
\end{proof}
There exist a version of Remark \ref{r3.1} for semi-inner product $H^*$-modules and
 there are applications from theorems and results in this section for the approximation of
some discrete transforms in a semi-inner product $H^*$-module. However, the details are omitted but each of them can be proven
 in a similar manner as section 4.

Let $\mathcal{A}$ be a Banach $*$-algebra and $X$ be a semi-inner product $\mathcal{A}$-module (see \cite{gha}).
Utilizing Schwarz inequality (\ref{2.5}) or (\ref{2.6}) and a version of Lemma \ref{l2.1} for semi-inner product $\mathcal{A}$-module $X$.
The technique of the proof of Theorem \ref{t3.1} is applicable to semi-inner product
Banach $*$-modules as well.


\end{document}